\documentclass[11pt,a4paper]{article}
\usepackage[affil-it]{authblk}
\usepackage[T1]{fontenc}
\usepackage[utf8]{inputenc}
\usepackage{lmodern}
\usepackage[margin=25mm]{geometry}
\usepackage{amsmath,amssymb,amsthm,mathtools}
\usepackage{microtype}
\usepackage[hidelinks]{hyperref}
\numberwithin{equation}{section}
\newtheorem{theorem}{Theorem}[section]
\newtheorem{proposition}[theorem]{Proposition}
\newtheorem{lemma}[theorem]{Lemma}
\theoremstyle{definition}
\newtheorem{example}[theorem]{Example}
\theoremstyle{remark}

\newcommand{\C}{\mathbb C}
\newcommand{\R}{\mathbb R}
\newcommand{\Z}{\mathbb Z}
\newcommand{\Id}{\mathrm{Id}}
\newcommand{\one}{\mathbf 1}
\newcommand{\diag}{\operatorname{diag}}
\newcommand{\rank}{\operatorname{rank}}
\newcommand{\Ran}{\operatorname{Ran}}
\newcommand{\spann}{\operatorname{span}}
\newcommand{\Cay}{\operatorname{Cay}}
\newcommand{\ip}[2]{\langle #1,#2\rangle}

\title{The fractional Calder\'on problem on finite Cayley graphs}

\author{Li Li \thanks{lili19940301@mail.tsinghua.edu.cn}}
\affil{Yau Mathematical Sciences Center, Tsinghua University, Beijing, China}
\date{}

\begin{document}
\maketitle
\vspace{-2em}

\begin{abstract}
We study a  Calder\'on-type inverse problem for the spectral fractional
Laplacian on finite Cayley graphs. For a known self-adjoint operator,
recovery from a single measurement reduces to two requirements: full rank of the
boundary (exterior)-to-interior coupling and nonvanishing of the interior solution.
We verify both requirements on Cartesian products of arbitrary finite
groups with even cycles, using consecutive cyclic halves as the
accessible and inaccessible sets. Fractionalization increases the
coupling rank, and one fixed nonnegative nonzero
input determines every nonnegative potential. Our argument relies on a discrete maximum principle and an explicit resolvent formula.
\end{abstract}

\section{Introduction}\label{sec:introduction}

The classical stationary Schr\"odinger equation is
\[
 (-\Delta+q)u=0\quad\text{in }\Omega,\qquad u|_{\partial\Omega}=f.
\]
The associated Dirichlet-to-Neumann (DN) map takes $f$ to the Neumann derivative
$\partial_\nu u$. The inverse problem asks whether this map determines
the potential $q$. The global uniqueness theorem of
Sylvester--Uhlmann \cite{SylvesterUhlmann} is the central reference for this classical Calder\'on problem.

For $0<s<1$, the fractional Schr\"odinger equation is
\begin{equation}\label{eq:continuum}
 ((-\Delta)^s+q)u=0\quad\text{in }\Omega,\qquad
 u=f\quad\text{in }\Omega_e:=\R^n\setminus\overline\Omega.
\end{equation}
The associated exterior DN measurement 
$$\Lambda_qf=((-\Delta)^su)|_{\Omega_e}.$$
Ghosh--Salo--Uhlmann \cite{GhoshSaloUhlmann} obtained the unique determination of $q$ from
partial knowledge of the DN map, using strong uniqueness and Runge approximation for the fractional Laplacian.
Ghosh--R\"uland--Salo--Uhlmann \cite{GhoshRulandSaloUhlmann} subsequently
proved uniqueness and reconstruction from one fixed nontrivial exterior
datum under their regularity assumptions. Their recovery has two steps:
first determine the solution from the exterior measurement, and then use
\[
 q=-\frac{(-\Delta)^su}{u}.
\]
Here the denominator is nonzero, which is ensured by the unique continuation property of the fractional Laplacian in this continuum case.

In this paper, we consider the corresponding finite problem with a known graph and
known edge weights. For a partition of its vertices into an inaccessible
set $I$ and an accessible set $B$, corresponding to ``interior'' and ``boundary'' (or ``exterior'') respectively in the continuum case.
Let $L$ be the graph
Laplacian and set $K=L^s$, defined by spectral calculus. For a real
potential $q$ on $I$ and a boundary input $f$ on $B$, we solve
\begin{equation}\label{eq:intro-problem}
 (Ku)(x)+q(x)u(x)=0\quad(x\in I),\qquad u|_B=f
\end{equation}
and define the DN map by
\begin{equation}\label{eq:DN-map}
  \Lambda_qf=(Ku)|_B. 
\end{equation}
Here the input and output
are boundary vectors. Our main theorem below gives an infinite class of graphs
on which a single measurement determines every nonnegative potential.

\begin{theorem}[Single-measurement recovery on product Cayley graphs]
\label{thm:positive-single}
Let $H$ be any finite group (including the trivial case) with an inverse-symmetric generating set
$T$. For $m\ge3$,
let $L$ be the unweighted Laplacian of
\[
 G=H\times\Z_{2m},\qquad
 S=(T\times\{0\})\cup\{(e_H,1),(e_H,-1)\}.
\]
We choose consecutive cyclic halves
\[
 I=H\times\{0,\ldots,m-1\},\qquad
 B=H\times\{m,\ldots,2m-1\}.
\]
For every fixed $0<s<1$,
\begin{equation}\label{eq:main-ranks}
 \rank L_{IB}=2|H|,\qquad
 \rank(L^s)_{IB}=m|H|=|I|.
\end{equation}
We fix a real nonzero input $f\ge0$ on $B$.
For every $q:I\to[0,\infty)$, the equation \eqref{eq:intro-problem} has
a unique solution with $u(x)>0$ for all $x\in I$. The single output
$g=\Lambda_qf$ uniquely determines $q$.
More precisely, with $K=L^s$ in \eqref{eq:intro-problem}, we have the reconstruction formula:
\begin{equation}\label{eq:positive-recovery}
 v=K_{BI}^{-1}(g-K_{BB}f),\qquad
 q(x)=-\frac{(K_{II}v+K_{IB}f)(x)}{v(x)}.
\end{equation}
Here $v$ is the restriction of the solution $u$ to $I$ (corresponding to $u_I$ in Theorem \ref{thm:single}).
\end{theorem}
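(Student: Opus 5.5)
The proof splits into three parts: the rank of $L_{IB}$, the rank of $(L^s)_{IB}$, and then well-posedness, positivity and reconstruction, which I would get from the general single-measurement result (Theorem \ref{thm:single}) once its two hypotheses are checked.

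\textbf{Rank of $L_{IB}$.} The only edges between $I$ and $B$ are cyclic edges. They join $(h,m-1)$ to $(h,m)$ and $(h,0)$ to $(h,2m-1)$, for each $h\in H$. Hence $L_{IB}=-P$, where $P$ is a partial permutation matrix with exactly $2|H|$ ones. These ones lie in distinct rows and columns because $m\ge3$, so $0\neq m-1$ and $m\neq 2m-1$. This gives $\rank L_{IB}=2|H|$.

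\textbf{Rank of $(L^s)_{IB}$.} Since $L=L_H\otimes\Id+\Id\otimes L_C$ and the two summands commute, I would use the subordination formula
\[
L^s=\frac{\sin\pi s}{\pi}\int_0^\infty t^{s-1}L(t+L)^{-1}\,dt .
\]
The $IB$ block of $L(t+L)^{-1}$ equals $-t\,[(t+L)^{-1}]_{IB}$, because the identity has vanishing off-diagonal blocks. Therefore
\[
(L^s)_{IB}=-\frac{\sin\pi s}{\pi}\int_0^\infty t^{s}\,[(t+L)^{-1}]_{IB}\,dt .
\]
The resolvent $(t+L)^{-1}$ has strictly positive entries because $G$ is connected, so $(L^s)_{IB}$ is entrywise negative. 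Strict negativity alone does not give full rank, so I need more structure.

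I would diagonalize in the $H$ direction. Decompose $\ell^2(H)$ spectrally for $L_H$, with eigenvalues $\mu\ge0$, and write $L_C$ for the Laplacian of the cycle $\Z_{2m}$. Then $L^s$ acts on the $\mu$-eigenspace as $(\mu+L_C)^s$, tensored with the identity. So it suffices to show the following for every $\mu\ge0$: the $m\times m$ block $[(\mu+L_C)^s]_{IB}$ of the cycle, indexed by the halves $\{0,\dots,m-1\}$ and $\{m,\dots,2m-1\}$, is invertible.

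On the cycle, the resolvent $(\lambda+L_C)^{-1}$ has the explicit formula
\[
\frac{\cosh\bigl((m-|j-k|)\theta\bigr)}{2\sinh\theta\,\sinh(m\theta)},\qquad 2\cosh\theta=2+\lambda ,
\]
where $|j-k|$ denotes the cyclic distance. Applying the same subordination with $\lambda=\mu+t$ writes $[(\mu+L_C)^s]_{IB}$ as a positive integral of kernels $\cosh((m-d(j,k))\theta)$, plus a $\mu$-dependent term of the same form. Reindex $k\mapsto k-m$. Then $d(j,k)$ for $j\in I,k\in B$ becomes $m-|j-k'|$, so the kernel becomes $\cosh(|j-k'|\theta)$ on $\{0,\dots,m-1\}^2$. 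Equivalently, $\cosh(\theta(j-k'))=\cosh j\theta\cosh k'\theta-\sinh j\theta\sinh k'\theta$.

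\textbf{Main obstacle.} The step I expect to be hardest is showing that the integrated kernel, with the minus sign, is nonsingular. A sum of rank-two kernels can easily become degenerate. My plan is to show that $-[(L^s)]_{IB}$, after reindexing, is a strictly totally positive or at least sign-regular Toeplitz matrix. The idea is that $\cosh(x\theta)$ is a Laplace transform in the variable $x$, and mixtures of such kernels over $\theta$ with a positive measure produce a Hankel/Toeplitz structure of Pólya-frequency type. If that fails, the fallback is a discrete-Fourier (circulant embedding) argument: show that the symbol $\sum_d c_d e^{i d\omega}$ has no zeros at the relevant frequencies.

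\textbf{Well-posedness, positivity and reconstruction.} For $q\ge0$, the matrix $K_{II}+\diag q$ is a nonsingular M-matrix. It has nonpositive off-diagonal entries, since the off-diagonal entries of $L^s$ are negative. It is positive definite because $K_{II}$ is a principal submatrix of $L^s$, and $L^s$ has a kernel consisting only of constants, which cannot be supported in $I$. Its inverse is therefore entrywise nonnegative. Irreducibility holds because all off-diagonal entries are negative, so the inverse is in fact strictly positive. Hence
\[
u_I=-(K_{II}+q)^{-1}K_{IB}f>0
\]
whenever $f\ge0$ is nonzero; this is the discrete maximum principle.

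Then $g=K_{BI}v+K_{BB}f$. The matrix $K_{BI}=K_{IB}^T$ is square and invertible by the rank part, which determines $v$. Finally, the interior equation $K_{II}v+K_{IB}f+qv=0$ together with $v>0$ yields the formula \eqref{eq:positive-recovery}, as in Theorem \ref{thm:single}.
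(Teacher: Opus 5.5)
Everything except the central step is fine. That includes the partial-permutation count for $\rank L_{IB}$ and the reduction through an eigenbasis of $L_H$ to invertibility of the $m\times m$ blocks $C_\mu=[(\mu\Id+L_C)^s]_{I_CB_C}$. Your $\cosh$ form of the cycle resolvent is also correct; it is the paper's $(r^d+r^{2m-d})/(\Delta(a)(1-r^{2m}))$ with $r=e^{-\theta}$. The M-matrix argument for $u_I>0$ is a valid substitute for the paper's maximum principle. (There is no extra $\mu$-dependent term: the block is exactly $-\frac{\sin\pi s}{\pi}\int_0^\infty t^s[((\mu+t)\Id+L_C)^{-1}]_{I_CB_C}\,dt$.)

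The genuine gap is that invertibility of $C_\mu$, which is the whole content of $\rank(L^s)_{IB}=|I|$, is never proved. Moreover, the routes you sketch fail in the orientation you chose. Each slice $\cosh((j-k')\theta)=\cosh j\theta\cosh k'\theta-\sinh j\theta\sinh k'\theta$ is an \emph{indefinite} rank-two kernel, and the integrated matrix is genuinely indefinite. For $m=3$ and $\mu=0$, the explicit $C_0$ of Example~\ref{ex:matrices} gives, for every $s\in(0,1)$, $z^TC_0z=3^s-1>0$ for $z=(1,0,-1)$ and $z^TC_0z=-(1-3^s+2\cdot4^s)/3<0$ for $z=(1,0,1)$. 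So no one-signed symbol exists, and a symbol argument says nothing about a finite non-circulant Toeplitz section anyway. Nor is $-C_\mu$ of P\'olya-frequency type. Its entries are positive mixtures of $\cosh(|j-k'|\theta)$, hence strictly increasing in $|j-k'|$, so already its leading $2\times2$ minor $c(0)^2-c(1)^2$ is negative.

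The missing idea is to reverse the column order, i.e.\ to work with $C_\mu J_m$ and $k''=m-1-k'$. With $x_j=(j-\tfrac{m-1}{2})\theta$ one has $(j-k')\theta=x_j+x_{k''}$, so the minus sign becomes a plus: $\cosh(x_j+x_{k''})=\cosh x_j\cosh x_{k''}+\sinh x_j\sinh x_{k''}$ is a sum of two positive semidefinite rank-one kernels. Thus $-C_\mu J_m$ is a positive integral of positive semidefinite matrices. If $z^T(-C_\mu J_m)z=0$, then $\sum_j z_je^{\pm x_j}=0$, hence $\sum_jz_je^{j\theta}=0$, for all $\theta$ in the interval swept out as $t$ runs over $(0,\infty)$. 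A polynomial in $e^\theta$ vanishing on an interval is zero, so $z=0$. Hence $-C_\mu J_m$ is positive definite and $C_\mu$ is invertible. This is exactly the paper's Theorem~\ref{thm:reflected}, written there as $r|p(r)|^2+r^{2m-1}|p(r^{-1})|^2$ with $r=e^{-\theta}$. Your sign-regularity intuition becomes correct only after this reflection, where the matrix is a Hankel moment matrix. Even then, positive definiteness is all that is needed, and it is immediate.
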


Here $K_{BI}$ means rows in $B$ and columns in $I$. The input may be supported at one vertex, but the output is measured at all $m|H|$ boundary vertices. Thus a single measurement is a vector of
$m|H|$ values for $m|H|$ unknown potential values.

\subsection{Connection with earlier literature}

For stationary finite-graph problems, Boyer--Garzella--Guevara Vasquez
\cite{BoyerGarzellaGuevaraVasquez} study conductivities and potentials
using DN matrices and products of solutions. For evolution data,
Bl\aa sten--Isozaki--Lassas--Lu \cite{BlastenIsozakiLassasLu} relate
heat and random-walk observations to spectral recovery.
Covi--Lassas \cite{CoviLassas} study long-jump random walks whose
observable transition-matrix powers determine a gauge class under
their rank assumptions. These works motivate the question of how
observable-to-hidden coupling affects parameter recovery in inverse problems.

In the continuum, R\"uland--Salo \cite{RulandSalo} develop quantitative
approximation and logarithmic stability for the fractional Calder\'on
problem initiated in \cite{GhoshSaloUhlmann}. The anisotropic fractional Calderón problem has been studied by Feizmohammadi--Ghosh--Krupchyk--Uhlmann
\cite{FeizmohammadiGhoshKrupchykUhlmann}: local source-to-solution data
for the fractional Laplace–Beltrami operator on a closed Riemannian manifold determine
its geometry up to an isometry. Feizmohammadi--Krupchyk--Uhlmann
\cite{FeizmohammadiKrupchykUhlmann} further study fractional
Schr\"odinger Cauchy data and recover both geometry and a potential
under geometric assumptions. These geometric results use spectral
fractional powers, as does our finite model, but their data
are different from those here.

Our goal here is to establish a direct discrete analogue to the single measurement statement for the fractional Calder\'on problem \cite{GhoshRulandSaloUhlmann}.
Here the graph and weights are fixed, the unknown is a diagonal potential. Our spectral fractional $L^s$ is not the
distance-based transition operator of Covi--Lassas. The analogy with
\cite{GhoshRulandSaloUhlmann} is the recovery mechanism: first recover
the interior state, then divide to obtain the potential. Here inversion
of $K_{BI}$ replaces continuum state recovery, and a discrete maximum
principle supplies strict positivity. We remark that the unique continuation  in the continuum does not have a direct analogue in the discrete setting. We refer readers to \cite{FernandezRoncalRuland} for discussions on discrete unique
continuation. Instead, our result 
verifies a precise rank condition for specified partitions.

\subsection{Organization}

The rest of this paper is organized in the following way. Section~\ref{sec:preliminaries} provides preliminary knowledge of weighted graphs, Cayley graphs,
tensor spaces, and spectral fractional powers. We provide statements for general self-adjoint operators in Section~\ref{sec:general}. More precisely, Section~\ref{sec:general}
derives the DN matrix, the single measurement criterion and its
obstructions, and the weaker product-spanning condition for full DN
measurements. We focus on the spectral fractional Laplacian on Cayley graphs in Section~\ref{sec:fractional}. More precisely, Section~\ref{sec:fractional} proves positivity, computes the cycle
resolvent, establishes full fractional coupling rank on product graphs,
and completes the proof of Theorem~\ref{thm:positive-single}.
Section~\ref{sec:examples} gives explicit examples clarifying the dependence on the graph and partition.

\section{Preliminaries}\label{sec:preliminaries}

Let $X=I\sqcup B$ be a partition of a finite set, with inaccessible vertices $I$ and accessible vertices $B$. The words ``interior'' and ``boundary'' refer to
this chosen partition; an abstract graph has no distinguished interior or boundary.

\subsection{Functions and matrix notations}

For a finite set $X$ of size $N$, a function $u:X\to\C$ is a vector in
$\C^X\simeq\C^N$ after an ordering of $X$ is chosen. We use counting
measure and the inner product linear in its first argument,
\[
 \ip{u}{v}_X=\sum_{x\in X}u(x)\overline{v(x)}=v^*u.
\]
Here $M^*=\overline M^{\,T}$ denotes the adjoint matrix. A matrix is
self-adjoint, or Hermitian, if $M=M^*$; a real self-adjoint matrix is
symmetric. We write $M\ge0$ if $v^*Mv\ge0$ for every $v$, and $M>0$
if the inequality is strict for $v\ne0$.

We use $e_x$ to denote the vector which has value one at $x$ and zero elsewhere. For a potential
$q:I\to\R$, multiplication by $q$ is the diagonal matrix
$Q=\diag(q(x))_{x\in I}$. The Hadamard product is componentwise
multiplication:
\[
 (a\odot b)(x)=a(x)b(x).
\]
It differs from a matrix product. For subspaces $U,V\subset\C^I$,
$U\odot V$ denotes the subspace spanned by the Hadamard products of elements from $U$ and $V$. We write
$\overline V=\{\overline v:v\in V\}$.

\subsection{The graph Laplacian}

A finite undirected weighted graph consists of vertices $X$ and weights
$w_{xy}=w_{yx}\ge0$, with $w_{xx}=0$. An edge joins $x$ to $y$
exactly when $w_{xy}>0$. A path joins a sequence of distinct vertices
through edges, and the graph is connected provided that each pair of vertices can
be joined by a path. The weighted degree is $d_x=\sum_yw_{xy}$. The graph Laplacian is
\begin{equation}\label{eq:graph-L}
 (Lu)(x)=\sum_yw_{xy}(u(x)-u(y)),\qquad L=D-W,
\end{equation}
where $D=\diag(d_x)$ and $W=(w_{xy})$ is the adjacency matrix.
In an unweighted graph each edge weight is one. A graph is
$D_0$-regular provided that each vertex has degree $D_0$.

Pairing the terms indexed by $(x,y)$ and $(y,x)$ we obtain
\begin{equation}\label{eq:graph-energy}
 \ip{Lu}{u}_X=\frac12\sum_{x,y}w_{xy}|u(x)-u(y)|^2\ge0.
\end{equation}
Thus $L=L^*\ge0$. Also $L\one=0$, and connectedness gives
$\ker L=\spann\{\one\}$. We refer readers to \cite{Chung} for more background knowledge of spectral graph theory.

\subsection{Finite groups and Cayley graphs}

A group is a set with an associative operation, an identity $e$, and
an inverse $g^{-1}$ for each element $g$. We say that a group is abelian provided that $gh=hg$ for
all elements. For an abelian group it is conventional to use additive
notation, writing $g+h$ instead of $gh$. A finite group has finitely
many elements. An example is the cyclic group $\Z_M$, with addition
modulo $M$. We refer readers to
\cite{Serre} for further background on finite groups.

A subset $S\subset G$ generates $G$ if every group element is a product
of elements of $S$. The generating set $S$ is said to be inverse-symmetric provided that 
\[
 e\notin S,\qquad S=S^{-1}:=\{s^{-1}:s\in S\}.
\]
Assign weights
$a_s>0$ satisfying $a_s=a_{s^{-1}}$. The Cayley graph $\Cay(G,S)$
joins $g$ to $gs$ for every $s\in S$. Its weighted Laplacian is
\begin{equation}\label{eq:group-L}
 (Lf)(g)=\sum_{s\in S}a_s(f(g)-f(gs)).
\end{equation}
It is connected because $S$ generates $G$, and is regular with degree
$\sum_sa_s$. A generator of order two is counted once.
Left multiplication $g\mapsto hg$ preserves all edges and weights.
Thus the operator is left-translation invariant, although the partition
$I\sqcup B$ usually breaks this invariance.

\begin{example}[A cycle]\label{ex:cycle}
For the cyclic group $\Z_M$ of order $M\ge3$, and $S=\{1,-1\}$, the unweighted Cayley
graph is the cycle $C_M$. Its Laplacian is
\[
 (Lf)(j)=2f(j)-f(j+1)-f(j-1),
\]
where the indices are taken modulo $M$.
\end{example}

\subsection{Cartesian products, tensor spaces, and product groups}

\paragraph{Products of sets and tensor spaces.}
Let $X=\{x_1,\ldots,x_n\}$ and $Y=\{y_1,\ldots,y_p\}$ be finite sets with fixed orderings. For the product set $X\times Y$, we use the ordering:
\[
 (x_1,y_1),\ldots,(x_1,y_p),\;
 (x_2,y_1),\ldots,(x_2,y_p),\;\ldots,\;
 (x_n,y_1),\ldots,(x_n,y_p).
\]

A function on this set can be interpreted as a vector in the tensor space
\begin{equation}\label{eq:tensor-space}
 \C^{X\times Y}\simeq\C^X\otimes\C^Y,
 \qquad\dim(\C^X\otimes\C^Y)=|X||Y|=np.
\end{equation}
Here $e_{x_j}\otimes e_{y_k}$ corresponds to the function that equals one at
$(x_j,y_k)$ and zero elsewhere; these vectors form a basis in the
order specified above. A simple tensor $v\otimes z$ is the function
$$(x,y)\mapsto v(x)z(y),$$ 
with column-vector representation
\[
 v\otimes z=
 \begin{pmatrix}v(x_1)z\\v(x_2)z\\\vdots\\v(x_n)z\end{pmatrix}.
\]
A general function is a finite sum of simple tensors. The inner product on this space satisfies
\begin{equation}\label{eq:tensor-inner-product}
 \ip{v\otimes z}{v'\otimes z'}_{X\times Y}
 =\sum_{x,y}v(x)z(y)\overline{v'(x)z'(y)}
 =\ip{v}{v'}_X\ip{z}{z'}_Y.
\end{equation}
Therefore tensor products of orthonormal bases in the two factors form an
orthonormal basis on the product. In particular,
$\{e_{x_j}\otimes e_{y_k}\}_{j,k}$ is an orthonormal basis.

Let $A$ act on $\C^X$ and let $D$ act on
$\C^Y$. the Kronecker product is defined by
\[
 (A\otimes D)(v\otimes z)=(Av)\otimes(Dz).
\]
More explicitly, in the chosen product ordering it is
the $np\times np$ matrix
\begin{equation}\label{eq:Kronecker-product-matrix}
 A\otimes D=
 \begin{pmatrix}
  a_{11}D&a_{12}D&\cdots&a_{1n}D\\
  a_{21}D&a_{22}D&\cdots&a_{2n}D\\
  \vdots&\vdots&\ddots&\vdots\\
  a_{n1}D&a_{n2}D&\cdots&a_{nn}D
 \end{pmatrix},
\end{equation}
where each displayed block is $p\times p$. We will use the following basic identities:
\[
 (A\otimes D)(E\otimes F)=(AE)\otimes(DF),
 \qquad
 (A\otimes D)^*=A^*\otimes D^*.
\]
The product formula holds for general  rectangular matrices. In particular, when $A$ and $D$ are invertible square matrices,
\[
 (A\otimes D)^{-1}=A^{-1}\otimes D^{-1}.
\]

\paragraph{Cartesian products of graphs.}
The Cartesian product of graphs with vertex sets $X,Y$ joins $(x,y)$
to $(x',y)$ when $x$ and $x'$ are adjacent, and to $(x,y')$ when
$y$ and $y'$ are adjacent. Only one coordinate changes along an edge.
If their Laplacians are $L_1,L_2$, the product Laplacian is
\begin{equation}\label{eq:cartesian-L}
 L=L_1\otimes\Id_Y+\Id_X\otimes L_2.
\end{equation}
It acts on the tensor space in \eqref{eq:tensor-space}. Indeed, for a
simple tensor,
\[
 L(v\otimes z)=(L_1v)\otimes z+v\otimes(L_2z),
\]
and simple tensors span the whole tensor space. Equivalently, for a general $u$,
\[
 (Lu)(x,y)
 =\sum_{x'}w^{(1)}_{xx'}(u(x,y)-u(x',y))+\sum_{y'}w^{(2)}_{yy'}(u(x,y)-u(x,y')).
\]

\paragraph{Direct products of groups and their Cayley graphs.}
For two groups $G_1, G_2$, the direct product $G_1\times G_2$ is a group equipped with the componentwise
multiplication:
$$(g_1,g_2)(h_1,h_2)=(g_1h_1,g_2h_2).$$
If inverse-symmetric $S_1,S_2$ generate $G_1, G_2$ respectively, then we choose
\[
 S=(S_1\times\{e_2\})\cup(\{e_1\}\times S_2),
\]
which generates $G_1\times G_2$.
$\Cay(G_1\times G_2,S)$ is called the Cartesian product of the two Cayley
graphs $\Cay(G_1,S_1), 
\Cay(G_2,S_2)$. In Theorem~\ref{thm:positive-single}, we focus on
$H\times\Z_M$ with generators $(t,0)$, $t\in T$, and $(e_H,\pm1)$;
for this choice \eqref{eq:cartesian-L} applies.

\subsection{Spectral powers and the heat semigroup}

Let $L$ be the Laplacian on a connected undirected weighted graph consisting of $N$ vertices $X$.
The spectral theorem gives an orthonormal eigenbasis $\phi_j$:
\[
 L\phi_j=\lambda_j\phi_j,\qquad
 0=\lambda_0<\lambda_1\le\cdots\le\lambda_{N-1}.
\]
For $0<s<1$, we define
\begin{equation}\label{eq:spectral-power}
 L^su=\sum_{j=0}^{N-1}\lambda_j^s\ip{u}{\phi_j}_X\phi_j.
\end{equation}
Thus $L^s$ is self-adjoint, nonnegative, and has the same constant kernel
as $L$. Likewise
\[
 e^{-tL}u=\sum_{j=0}^{N-1}e^{-t\lambda_j}\ip{u}{\phi_j}_X\phi_j
\]
gives the solution operator of the graph heat equation 
\[\partial_tu+Lu=0.\]
Derivation of the integral formulas and strict sign properties needed for inverse recovery will be presented later in Section~\ref{sec:fractional}. We refer readers to \cite{BianchiEtAl} for more discussions on spectral fractional graph operators and related random walks.

\paragraph{Spectral powers on a Cartesian product.}
Now let $|X|=n$ and $|Y|=p$. Suppose
\[
 L_1\psi_\alpha=\mu_\alpha\psi_\alpha\quad(1\le\alpha\le n),
 \qquad
 L_2\phi_k=\lambda_k\phi_k\quad(1\le k\le p),
\]
where both eigenbases are orthonormal.
For $L=L_1\otimes\Id_Y+\Id_X\otimes L_2$, the tensor rule gives
\[
 L(\psi_\alpha\otimes\phi_k)
 =(\mu_\alpha+\lambda_k)(\psi_\alpha\otimes\phi_k).
\]
By \eqref{eq:tensor-inner-product}, these $|X||Y|$ eigenvectors form
an orthonormal basis. Hence the fractional product Laplacian is
\begin{equation}\label{eq:fractional-product-expansion}
 L^su=\sum_{\alpha,k}(\mu_\alpha+\lambda_k)^s
 \ip{u}{\psi_\alpha\otimes\phi_k}_{X\times Y}
 (\psi_\alpha\otimes\phi_k).
\end{equation}
In particular, 
\[
 L^s(\psi_\alpha\otimes\phi_k)
 =(\mu_\alpha+\lambda_k)^s(\psi_\alpha\otimes\phi_k).
\]
Note that
$$(\mu_\alpha+\lambda_k)^s\ne\mu_\alpha^s+\lambda_k^s$$
in general, so $L^s$
is not the same as $(L^s_1\otimes\Id_Y+\Id_X\otimes L^s_2)$.

There is another useful way to express $L^s$. Let
$\Pi_\alpha=\psi_\alpha\psi_\alpha^*$ be the rank-one orthogonal
projection from $\C^X$ onto the subspace spanned by $\psi_\alpha$. Then
\begin{equation}\label{eq:fractional-product-decomposition}
 L^s=\sum_\alpha\Pi_\alpha\otimes(\mu_\alpha\Id_Y+L_2)^s.
\end{equation}
Indeed, acting on the single tensor $\psi_\alpha\otimes \phi_k$, the right hand side of \eqref{eq:fractional-product-decomposition} becomes
$$\psi_\alpha\otimes (\mu_\alpha\Id_Y+L_2)^s\phi_k=
\psi_\alpha\otimes (\mu_\alpha+\lambda_k)^s\phi_k
= (\mu_\alpha+\lambda_k)^s(\psi_\alpha\otimes\phi_k)= L^s(\psi_\alpha\otimes\phi_k).$$

This decomposition also explains the cross-block calculation used
later. Let $Y=I_Y\sqcup B_Y$. We choose
$I=X\times I_Y$, $B=X\times B_Y$. From \eqref{eq:Kronecker-product-matrix}, Restricting \eqref{eq:fractional-product-decomposition} gives
\begin{equation}\label{eq:product-cross-decomposition}
 (L^s)_{IB}=\sum_\alpha\Pi_\alpha\otimes
 \big[(\mu_\alpha\Id_Y+L_2)^s\big]_{I_YB_Y}.
\end{equation}
Here $I_YB_Y$ means rows in $I_Y$ and columns in $B_Y$.

Let $V=(\psi_1,\ldots,\psi_n)$, so $V^*V=\Id_X$.
If $E_{\alpha\alpha}$ is the $n\times n$ matrix with its only
nonzero entry equal to one in position $(\alpha,\alpha)$, then
$$V^*\Pi_\alpha V=E_{\alpha\alpha}.$$ The Kronecker product formula
therefore gives
\begin{align*}
 &(V^*\otimes\Id_{I_Y})(\Pi_\alpha\otimes \big[(\mu_\alpha\Id_Y+L_2)^s\big]_{I_YB_Y})
        (V\otimes\Id_{B_Y})\\
 &\hspace{2em}=(V^*\Pi_\alpha V)\otimes \big[(\mu_\alpha\Id_Y+L_2)^s\big]_{I_YB_Y}
 =E_{\alpha\alpha}\otimes \big[(\mu_\alpha\Id_Y+L_2)^s\big]_{I_YB_Y}.
\end{align*}
Summing over $\alpha$ yields
\begin{equation}\label{eq:unitary-cross-blocks}
 (V^*\otimes\Id_{I_Y})(L^s)_{IB}(V\otimes\Id_{B_Y})
 =\bigoplus_{\alpha=1}^n \big[(\mu_\alpha\Id_Y+L_2)^s\big]_{I_YB_Y}.
\end{equation}
The block matrix on the right hand side is explicitly
\[
 \bigoplus_{\alpha=1}^n \big[(\mu_\alpha\Id_Y+L_2)^s\big]_{I_YB_Y}=
 \begin{pmatrix}
  \big[(\mu_1\Id_Y+L_2)^s\big]_{I_YB_Y}&\cdots&0\\
  \vdots&\ddots&\vdots\\
  
  0&\cdots&\big[(\mu_n\Id_Y+L_2)^s\big]_{I_YB_Y}
 \end{pmatrix}.
\]

Unitary changes of basis preserve rank, and
the rank of a block diagonal matrix is the sum of its block ranks.  Consequently,
\begin{equation}\label{eq:rank-cross-blocks}
 \rank(L^s)_{IB}=\sum_{\alpha=1}^n\rank \big[(\mu_\alpha\Id_Y+L_2)^s\big]_{I_YB_Y}.
\end{equation}
For $X=H$ and $Y=\Z_{2m}$, these are the main ingredients used in the proof of Theorem~\ref{thm:rank-enhance} later.

\section{The single-measurement statement for general self-adjoint operators}\label{sec:general}

\subsection{Dirichlet problems and the DN matrix}

Let $K$ be any Hermitian matrix acting on $\C^X$, with a nonempty proper partition
$X=B\sqcup I$. With boundary coordinates first, we write
\begin{equation}\label{eq:K-blocks}
 K=\begin{pmatrix}K_{BB}&K_{BI}\\K_{IB}&K_{II}\end{pmatrix},
 \qquad K_{BI}=K_{IB}^*.
\end{equation}
The notation $K_{IB}$ means rows in $I$ and columns in $B$, which is
a rectangular off-diagonal block. For a real potential $q:I\to\R$, set
\begin{equation}\label{eq:Aq}
 Q=\diag(q),\qquad A_q=K_{II}+Q.
\end{equation}
We call $q$ \emph{admissible} when $A_q$ is invertible. Given
$f\in\C^B$, the Dirichlet problem is
\begin{equation}\label{eq:Dirichlet}
 (Ku)_I+Qu_I=0,\qquad u_B=f.
\end{equation}
In blocks this is 
$$A_qu_I=-K_{IB}f.$$ 
For admissible $q$, the interior
Poisson operator is
\begin{equation}\label{eq:Poisson}
 u_I=P_qf,\qquad P_q=-A_q^{-1}K_{IB}.
\end{equation}
The potential acts only inside $I$. To formulate the inverse problem, we define the DN measurement
$$\Lambda_qf=(Ku)_B.$$
Substituting \eqref{eq:Poisson} gives
\begin{equation}\label{eq:DN}
\Lambda_q=K_{BB}-K_{BI}A_q^{-1}K_{IB}.
\end{equation}
This Schur-complement formula eliminates the interior variables and
gives the matrix relation between boundary input and output.
Since $A_q=A_q^*$, the DN matrix is self-adjoint.

\subsection{Reconstruction from a single measurement}

We fix $f\in\C^B$. Let $W\subset B$ be the observation set.
We write $g=R_W\Lambda_qf$, where $R_W$ is the restriction from $B$ to $W$.
From \eqref{eq:Poisson} and \eqref{eq:DN}, the measurement equation is
\begin{equation}\label{eq:measurement}
 g=K_{WB}f+K_{WI}u_I.
\end{equation}

\begin{theorem}[Single-measurement reconstruction]\label{thm:single}
Given a Hermitian $K$, assume $q$ is real and admissible, and
\begin{equation}\label{eq:full-rank}
 \rank K_{IW}=|I|.
\end{equation}
For the fixed input $f$, also assume that
\begin{equation}\label{eq:nonzero-state}
 u_I(x)\ne0\quad\text{for each }x\in I.
\end{equation}
Then $g=R_W\Lambda_qf$ uniquely determines $q$. More explicitly, set
\[
 J=(K_{WI}^*K_{WI})^{-1}K_{WI}^*.
\]
Then
\begin{equation}\label{eq:single-formula}
 u_I=J(g-K_{WB}f),\qquad
 q(x)=-\frac{(K_{II}u_I+K_{IB}f)(x)}{u_I(x)}.
\end{equation}
\end{theorem}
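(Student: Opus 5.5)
The plan is to recover the interior state $u_I$ from the measurement by a left inverse, and then read $q$ off the interior equation pointwise. First, the rank hypothesis \eqref{eq:full-rank} has to be turned into invertibility of $K_{WI}^*K_{WI}$. Since $K$ is Hermitian, $K_{WI}=K_{IW}^*$. So $\rank K_{WI}=\rank K_{IW}=|I|$, which means the $|W|\times|I|$ matrix $K_{WI}$ has full column rank and is therefore injective on $\C^I$. For $v\in\C^I$ we have $v^*K_{WI}^*K_{WI}v=\|K_{WI}v\|^2$, which vanishes only when $v=0$. Hence $K_{WI}^*K_{WI}>0$ is invertible, $J$ is well defined, and $JK_{WI}=\Id_I$.

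Next, apply $J$ to the measurement equation \eqref{eq:measurement}. That equation is valid because $q$ is admissible, so $u_I=P_qf$ exists and is unique by \eqref{eq:Poisson}. Restricting $\Lambda_qf=(Ku)_B=K_{BB}f+K_{BI}u_I$ to $W$ gives $g-K_{WB}f=K_{WI}u_I$. Applying $J$ then yields $u_I=J(g-K_{WB}f)$. This expresses $u_I$ entirely in terms of the known data $K$, $f$ and $g$, which is the first formula in \eqref{eq:single-formula}.

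Finally, the Dirichlet equation \eqref{eq:Dirichlet} reads componentwise $(K_{II}u_I+K_{IB}f)(x)+q(x)u_I(x)=0$ for each $x\in I$. By \eqref{eq:nonzero-state} we may divide by $u_I(x)$, which gives the second formula in \eqref{eq:single-formula}.

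For uniqueness, suppose two real admissible potentials $q_1,q_2$ both satisfy the hypotheses and produce the same $g$. The formula then gives the same $u_I$ for both, and hence the same $q$, since the right-hand side depends only on $K$, $f$ and $g$. There is no real obstacle here. The only point needing care is the identification $\rank K_{WI}=\rank K_{IW}$ through the Hermitian symmetry, which is what makes $J$ a genuine left inverse.
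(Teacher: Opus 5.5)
Your proposal is correct and follows essentially the same route as the paper: Hermitian symmetry gives full column rank of $K_{WI}$, so $J$ is a left inverse. Applying $J$ to the measurement equation recovers $u_I$, and the interior equation with the nonvanishing hypothesis yields the quotient formula for $q$.
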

\begin{proof}
Adjoint symmetry makes \eqref{eq:full-rank} equivalent to full column
rank of $K_{WI}$. Hence $J$ is well defined and $JK_{WI}=\Id_I$.
Apply $J$ to \eqref{eq:measurement} to recover $u_I$. The interior
equation in \eqref{eq:Dirichlet} gives the quotient in \eqref{eq:single-formula}, and
\eqref{eq:nonzero-state} permits division.
\end{proof}

This proof separates the two requirements: the block rank recovers $u_I$, and the non-vanishing
condition ensures the recovery of each potential value. Full rank \eqref{eq:full-rank} implies that $|W|\ge|I|$ is necessary for this mechanism.  In the real symmetric setting, full rank \eqref{eq:full-rank} is necessary for local recovery of
potentials from one input, as the next proposition shows.

\begin{proposition}\label{prop:rank-obstruction}
Suppose $K$ and $f$ are real, $q$ is real admissible, and every entry of
$u_I$ is nonzero. If 
$$\rank K_{WI}<|I|,$$ 
there exists a nonconstant curve
of real admissible potentials near $q$ giving exactly the same measurement
$R_W\Lambda_qf$.
\end{proposition}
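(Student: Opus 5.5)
Since $\rank K_{WI}<|I|$, there is a nonzero real vector $w\in\R^I$ with $K_{WI}w=0$; we may take $w$ real because $K$ is real. The plan is to perturb the interior state along $w$ while keeping the boundary data $f$ fixed. Each perturbed state will be forced to solve a Dirichlet problem with a suitably modified potential. Because $K_{WI}w=0$, the measurement equation \eqref{eq:measurement} will then return the same value $g$.

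Concretely, for small real $t$ set $v_t=u_I+tw$. Since every entry of $u_I$ is nonzero and $I$ is finite, $v_t(x)\ne0$ for all $x\in I$ whenever $|t|<\varepsilon$, with $\varepsilon=\min_x|u_I(x)|/(\max_x|w(x)|)$. Define the real potential
\[
 q_t(x)=-\frac{(K_{II}v_t+K_{IB}f)(x)}{v_t(x)},\qquad x\in I.
\]
It is real because $K$, $f$, $u_I$ and $w$ are real. By construction $(K_{II}+Q_t)v_t=-K_{IB}f$, so the pair $(v_t,f)$ solves \eqref{eq:Dirichlet} with potential $q_t$. At $t=0$ the interior equation for $u_I$ gives $q_0=q$. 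Moreover $t\mapsto q_t$ is a rational function of $t$ without poles on $(-\varepsilon,\varepsilon)$, so it is continuous and real-analytic.

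The next step is admissibility. Since $A_{q_t}=K_{II}+\diag(q_t)$ depends continuously on $t$ and $A_{q_0}=A_q$ is invertible, $A_{q_t}$ stays invertible for $|t|$ small, after shrinking $\varepsilon$ if needed. Hence $q_t$ is admissible. The Dirichlet solution is then unique and equals $v_t$, so $u_I(q_t)=v_t$. By \eqref{eq:measurement},
\[
 R_W\Lambda_{q_t}f=K_{WB}f+K_{WI}(u_I+tw)=K_{WB}f+K_{WI}u_I=R_W\Lambda_qf .
\]

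The step needing the most care is showing that the curve is nonconstant. If $q_t=q$ for some $t\neq0$, then uniqueness of the solution for the admissible $q$ would give $v_t=u_I$, i.e.\ $tw=0$, contradicting $w\ne0$. So $q_t\ne q$ for every small $t\ne0$. The same argument applied to two parameters shows $q_t\ne q_{t'}$ for $t\ne t'$, since both potentials are admissible and would otherwise share a unique solution. Thus the curve is injective, which completes the proof.
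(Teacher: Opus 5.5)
Your proof is correct and uses the same construction as the paper: you perturb $u_I$ along a kernel vector $w$ of $K_{WI}$, define $q_t$ by the quotient formula, and get admissibility from continuity. Your remark that $w$ can be taken real, which is needed for $q_t$ to be real, also makes explicit a point the paper leaves implicit. The only difference is the nonconstancy step. The paper computes $\left.\frac{d}{d\varepsilon}\right\vert_{\varepsilon=0}q_\varepsilon=-\diag(u_I)^{-1}A_qw\ne0$, which shows the curve has nonzero velocity at $q$. Your uniqueness argument ($q_t=q_{t'}$ forces $(t-t')A_{q_t}w=0$) avoids calculus and shows that $t\mapsto q_t$ is injective on the whole admissible interval.
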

\begin{proof}
Choose $0\ne w\in\ker K_{WI}$ and define 
$$v_\varepsilon=u_I+\varepsilon w.$$
For sufficiently small real $\varepsilon$, its entries are nonzero. Define
\[
 q_\varepsilon(x)
 =-\frac{(K_{II}v_\varepsilon+K_{IB}f)(x)}{v_\varepsilon(x)}.
\]
Then $(f,v_\varepsilon)$ solves the Dirichlet problem \eqref{eq:intro-problem} with potential
$q_\varepsilon$ instead of $q$. Its output is unchanged because $K_{WI}w=0$.
Also $q_0=q$, and $q_\varepsilon$ is admissible for sufficiently small $\varepsilon$
by continuity of the determinant.
To compute the derivative with respect to $\varepsilon$, we write
$$n_x(\varepsilon)=(K_{II}v_\varepsilon+K_{IB}f)(x).$$
The original interior equation gives $$n_x(0)=-q(x)u_I(x).$$
Also note that
$$n_x'(0)=(K_{II}w)(x).$$ 
The quotient rule yields
\begin{align*}
 \left.\frac{d}{d\varepsilon}\right\vert_{\varepsilon= 0}q_\varepsilon
 (x)
 &=-\frac{(K_{II}w)(x)u_I(x)-n_x(0)w(x)}{u_I(x)^2}\\
 &=-\frac{(K_{II}w)(x)+q(x)w(x)}{u_I(x)}
 =-\frac{(A_qw)(x)}{u_I(x)}.
\end{align*}
Therefore
\[
 \left.\frac{d}{d\varepsilon}\right\vert_{\varepsilon= 0}q_\varepsilon=-\diag(u_I)^{-1}A_qw\ne0
\]
since both $A_q$ and $\diag(u_I)$ are invertible and $w\ne0$.
This proves that the curve is nonconstant.
\end{proof}

\subsection{Full DN data and the weaker product-spanning criterion}

We also remark that if we have the full knowledge of $R_{W_N}\Lambda_q$ instead of $R_{W_N}\Lambda_qf$ for a single input $f$, then it is possible to obtain the unique determination of $q$ with much smaller $W$. 

The following discrete Alessandrini identity is similar to its counterpart in \cite{BoyerGarzellaGuevaraVasquez}. We include its derivation for self-completeness.

\begin{proposition}\label{prop:Alessandrini}
Let $q_1,q_2$ be real admissible potentials. Set $h=q_1-q_2$ and
$H_q=\diag(h)$. For boundary data $f\in\C^B$ and $E_Wg$ with
$g\in\C^W$, let $u_1,u_2$ be the corresponding solutions for $q_1,q_2$.
Then
\begin{align}
 \Lambda_{q_1}-\Lambda_{q_2}
 &=P_{q_2}^*H_qP_{q_1},\label{eq:DN-difference}\\
 \ip{(\Lambda_{q_1}-\Lambda_{q_2})f}{E_Wg}_B
 &=\sum_{x\in I}h(x)u_1(x)\overline{u_2(x)}.\label{eq:Hadamard}
\end{align}
Here $E_W$ extends an input by zero from $W$ to $B$.
\end{proposition}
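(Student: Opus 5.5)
We need to prove Proposition \ref{prop:Alessandrini}: the DN-difference formula \eqref{eq:DN-difference} and the Hadamard identity \eqref{eq:Hadamard}. The plan is pure linear algebra on the Schur-complement formula \eqref{eq:DN}, using that $K$ is Hermitian and the potentials are real.

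For \eqref{eq:DN-difference}, we start from
\[
 \Lambda_{q_1}-\Lambda_{q_2}=K_{BI}\big(A_{q_2}^{-1}-A_{q_1}^{-1}\big)K_{IB}.
\]
We then apply the resolvent identity. Since $A_{q_1}-A_{q_2}=Q_1-Q_2=H_q$, we get
\[
 A_{q_2}^{-1}-A_{q_1}^{-1}=A_{q_2}^{-1}(A_{q_1}-A_{q_2})A_{q_1}^{-1}=A_{q_2}^{-1}H_qA_{q_1}^{-1}.
\]
Substituting this gives $K_{BI}A_{q_2}^{-1}H_qA_{q_1}^{-1}K_{IB}$. Next we identify the two outer factors with Poisson operators. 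By \eqref{eq:Poisson}, $A_{q_1}^{-1}K_{IB}=-P_{q_1}$. Because $q_2$ is real and $K$ is Hermitian, $A_{q_2}^*=A_{q_2}$ and $K_{BI}=K_{IB}^*$. Hence $K_{BI}A_{q_2}^{-1}=(A_{q_2}^{-1}K_{IB})^*=-P_{q_2}^*$. The two minus signs cancel, which yields \eqref{eq:DN-difference}.

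For \eqref{eq:Hadamard}, we set $u_1=P_{q_1}f$, the interior part of the $q_1$-solution with data $f$, and $u_2=P_{q_2}E_Wg$, the interior part of the $q_2$-solution with data $E_Wg$. Then
\[
 \ip{(\Lambda_{q_1}-\Lambda_{q_2})f}{E_Wg}_B=\ip{P_{q_2}^*H_qP_{q_1}f}{E_Wg}_B=\ip{H_qu_1}{u_2}_I.
\]
Since $H_q$ is diagonal with entries $h(x)$, this inner product equals $\sum_x h(x)u_1(x)\overline{u_2(x)}$.

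The only point needing care is the adjoint step: it relies on reality of $q_2$, so that $A_{q_2}$ is self-adjoint, and on the convention that the inner product is linear in its first argument, so that moving $P_{q_2}^*$ across the pairing is valid. We also have to keep track of which potential is attached to which boundary datum. There is no genuine obstacle beyond this.
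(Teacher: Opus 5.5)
Your proof is correct and follows the paper's own argument. You start from the Schur-complement formula, apply the resolvent identity $A_{q_2}^{-1}-A_{q_1}^{-1}=A_{q_2}^{-1}H_qA_{q_1}^{-1}$, identify the outer factors with $-P_{q_2}^*$ and $-P_{q_1}$, and pair with $E_Wg$. You also spell out the step $K_{BI}A_{q_2}^{-1}=-P_{q_2}^*$ (via self-adjointness of $A_{q_2}$), which the paper leaves implicit. One small correction to your closing remark: an adjoint can be moved across the pairing under either convention. The linearity convention only decides which of $u_1,u_2$ is conjugated in the final sum.
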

\begin{proof}
Based on \eqref{eq:DN} and the matrix identity
\[
 A_{q_2}^{-1}-A_{q_1}^{-1}
 =A_{q_2}^{-1}(A_{q_1}-A_{q_2})A_{q_1}^{-1}
 =A_{q_2}^{-1}H_qA_{q_1}^{-1},
\]
we obtain \eqref{eq:DN-difference}. Pairing with $E_Wg$ gives
\[
 (E_Wg)^*P_{q_2}^*H_qP_{q_1}f
 =(u_2|_I)^*H_q(u_1|_I),
\]
which is exactly the finite sum in \eqref{eq:Hadamard}.
\end{proof}

We define
\begin{equation}\label{eq:partial-Poisson}
 P_{q,W}=-A_q^{-1}K_{IW}.
\end{equation}
Invertibility of $A_q$ implies
\begin{equation}\label{eq:reachable-dim}
 \dim\Ran(P_q)=\rank K_{IB},\qquad
 \dim\Ran(P_{q,W})=\rank K_{IW}.
\end{equation}
If $R_W\Lambda_{q_1}=R_W\Lambda_{q_2}$ and
\begin{equation}\label{eq:product-spanning}
 \Ran(P_{q_1})\odot\overline{\Ran(P_{q_2,W})}=\C^I,
\end{equation}
then \eqref{eq:Hadamard} implies $h=0$, so $q_1=q_2$.

Note that \eqref{eq:product-spanning} necessarily requires
\begin{equation}\label{eq:product-dimension}
 |I|\le(\rank K_{IB})(\rank K_{IW})\le|B||W|.
\end{equation}
Thus products may span the interior even when individual solution spaces
do not. This is a discrete counterpart of the density argument in
Calder\'on-type inverse problems, and suggests a smaller observation set than
the one-input mechanism requires, although the dimension bound \eqref{eq:product-dimension} alone is not a sufficient spanning condition.

\section{Fractional Laplacian on Cayley graphs}\label{sec:fractional}

\subsection{Integral formulas and nonlocal interaction}

We first establish the sign properties of the spectral fractional Laplacian on any finite connected weighted
graph. No group structure is needed for these properties.

\begin{lemma}[Strict positivity of the heat kernel]\label{lem:heat}
For every $t>0$, all entries of $e^{-tL}$ are strictly positive.
Moreover, $e^{-tL}\one=\one$.
\end{lemma}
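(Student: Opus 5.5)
The plan is to write the heat semigroup through a shift of the Laplacian that has nonnegative entries, and then read off strict positivity from connectedness. Let $d_{\max}=\max_x d_x$ and set $c\ge d_{\max}$. Then
\[
 L=c\,\Id-(c\,\Id-D+W),
\]
and $M:=c\,\Id-D+W$ has nonnegative entries: the diagonal entries are $c-d_x\ge0$ and the off-diagonal entries are $w_{xy}\ge0$. Since $c\,\Id$ commutes with $M$,
\[
 e^{-tL}=e^{-ct}e^{tM}=e^{-ct}\sum_{k\ge0}\frac{t^k}{k!}M^k .
\]
Every term is entrywise nonnegative, so $e^{-tL}$ is entrywise nonnegative. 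This matches the spectral definition of $e^{-tL}$ given earlier, because both are the same function of the self-adjoint matrix $L$.

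For strict positivity, fix vertices $x,y$. If $x=y$, the $k=0$ term contributes $e^{-ct}>0$. If $x\ne y$, connectedness gives a path $x=x_0,x_1,\ldots,x_k=y$ with $w_{x_{i-1}x_i}>0$. The entry $(M^k)_{xy}$ is a sum of nonnegative products over walks of length $k$, and it contains the path product $\prod_i w_{x_{i-1}x_i}>0$. Hence $(M^k)_{xy}>0$, and since $t^k/k!>0$ for $t>0$, we get $(e^{-tL})_{xy}>0$.

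For $e^{-tL}\one=\one$, use $L\one=0$ from the preliminaries. In the spectral formula, $\one/\sqrt N$ is the $\lambda_0=0$ eigenvector, so $e^{-tL}\one=e^{0}\one=\one$. Equivalently, the exponential series gives $\sum_k(-t)^kL^k\one/k!=\one$.

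The only delicate point is the choice of $c$ that makes $M$ entrywise nonnegative; the rest is the standard walk-counting argument for matrix powers.
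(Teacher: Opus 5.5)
Your proposal is correct and follows essentially the same route as the paper: shift to the entrywise nonnegative matrix $M=\rho\,\Id-L$ with $\rho\ge\max_x d_x$, expand $e^{-tL}=e^{-\rho t}e^{tM}$ as a series, get positive off-diagonal entries from a connecting path and positive diagonal entries from the $k=0$ term, and deduce $e^{-tL}\one=\one$ from $L\one=0$. Your extra remark that the power series agrees with the spectral definition of $e^{-tL}$ is a harmless addition that the paper leaves implicit.
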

\begin{proof}
Choose $\rho\ge\max_xd_x$ and set $M=\rho\Id-L$.
Its diagonal entries are $\rho-d_x\ge0$, and its off-diagonal entries
are edge weights $w_{xy}\ge0$. Thus
\[
 e^{-tL}=e^{-\rho t}e^{tM}
 =e^{-\rho t}\sum_{k=0}^\infty\frac{t^k}{k!}M^k
\]
is a sum of entrywise nonnegative matrices.
For $x\ne y$, connectedness enables us to choose a path
$x=x_0,x_1,\ldots,x_\ell=y$ with positive edge weights. Matrix
multiplication gives, for $\ell\ge2$,
\[
 (M^\ell)_{xy}
 =\sum_{z_1,\ldots,z_{\ell-1}\in X}
 M_{xz_1}M_{z_1z_2}\cdots M_{z_{\ell-1}y}
 \ge\prod_{j=0}^{\ell-1}w_{x_jx_{j+1}}>0.
\]
For $\ell=1$, this is simply $M_{xy}=w_{xy}>0$.
The $k=\ell$ term therefore makes $(e^{-tL})_{xy}>0$.
Diagonal entries are positive already from the $k=0$ term.
Finally, $L\one=0$ implies $e^{-tL}\one=\one$.
\end{proof}

For $\lambda\ge0$ and $0<s<1$, the scalar integral formulas are
\begin{align}
 \lambda^s
 &=c_s\int_0^\infty(1-e^{-t\lambda})\frac{dt}{t^{1+s}},
 &c_s&=\frac{s}{\Gamma(1-s)},\label{eq:scalar-heat}\\
 \lambda^s
 &=b_s\int_0^\infty t^{s-1}\frac{\lambda}{\lambda+t}\,dt,
 &b_s&=\frac{\sin(\pi s)}{\pi}.\label{eq:scalar-resolvent}
\end{align}
Applying these two identities to each eigenvalue yields
\begin{align}
 L^s&=c_s\int_0^\infty(\Id-e^{-tL})\frac{dt}{t^{1+s}},
 \label{eq:matrix-heat}\\
 L^s&=b_s\int_0^\infty t^{s-1}L(L+t\Id)^{-1}\,dt,
 \label{eq:matrix-resolvent}
\end{align}
which hold more generally for any self-adjoint positive semi-definite matrix, rather than only for the graph Laplacian.

We mention that both integrals converge in matrix norm. In the heat formula, the integrand is $O(t^{-s})$ near
zero and $O(t^{-1-s})$ at infinity.
In the resolvent formula,
$\|L(L+t\Id)^{-1}\|\le1$ near zero and it is $O(t^{-1})$
at infinity, so the integrand is $O(t^{s-1})$ near
zero and $O(t^{s-2})$ at infinity.
We also remark that the analogues of \eqref{eq:matrix-heat} and \eqref{eq:matrix-resolvent} in the continuum are the Bochner heat-semigroup
and Balakrishnan resolvent formulas. We refer readers to Kwa\'snicki
\cite[Definitions 2.3 and 2.4]{Kwasnicki} for discussions on their equivalence with other definitions.

\begin{proposition}[Positive fractional jump weights]\label{prop:jump}
For distinct vertices $x,y$, $(L^s)_{xy}<0$. We define
\begin{equation}\label{eq:jump-weight}
 w_s(x,y)=-(L^s)_{xy}
 =c_s\int_0^\infty(e^{-tL})_{xy}\frac{dt}{t^{1+s}}>0,
\end{equation}
and $w_s(x,x)=0$. Then we have
\begin{align}
 (L^su)(x)&=\sum_{y\ne x}w_s(x,y)(u(x)-u(y)),\label{eq:jump-form}\\
 \ip{L^su}{u}_X&=\frac12\sum_{x,y}w_s(x,y)|u(x)-u(y)|^2.
 \label{eq:fractional-energy}
\end{align}
\end{proposition}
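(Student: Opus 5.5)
The plan is to read everything off the heat-semigroup formula \eqref{eq:matrix-heat} combined with Lemma~\ref{lem:heat}.

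\emph{Off-diagonal entries.} For $x\ne y$ we have $(\Id)_{xy}=0$. Taking the $(x,y)$ entry of \eqref{eq:matrix-heat} gives
\[
(L^s)_{xy}=-c_s\int_0^\infty (e^{-tL})_{xy}\,\frac{dt}{t^{1+s}} .
\]
Taking entries commutes with the integral, since the integral converges in matrix norm. By Lemma~\ref{lem:heat}, the integrand is strictly positive for every $t>0$. It is also continuous in $t$, and $c_s>0$. Hence the integral is strictly positive, which proves \eqref{eq:jump-weight} and $(L^s)_{xy}<0$.

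The integral is finite, and we should check this even though it follows from the matrix-norm convergence. Near $t=0$ we have $(e^{-tL})_{xy}=O(t)$ for $x\ne y$, because the $k=0$ term of the exponential series vanishes off the diagonal. So the integrand there is $O(t^{-s})$. At infinity, $0\le (e^{-tL})_{xy}\le 1$, because the row sums equal one and all entries are nonnegative. So the integrand there is $O(t^{-1-s})$.

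\emph{Jump form \eqref{eq:jump-form}.} The spectral definition \eqref{eq:spectral-power} gives $L^s\one=0$, since $\one$ spans the $\lambda_0=0$ eigenspace. Hence every row sum of $L^s$ vanishes, so
\[
(L^s)_{xx}=-\sum_{y\ne x}(L^s)_{xy}=\sum_{y\ne x}w_s(x,y).
\]
Substituting into $(L^su)(x)=(L^s)_{xx}u(x)+\sum_{y\ne x}(L^s)_{xy}u(y)$ yields \eqref{eq:jump-form} directly.

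\emph{Energy identity \eqref{eq:fractional-energy}.} Since $L^s$ is real symmetric, $w_s(x,y)=w_s(y,x)\ge 0$, with $w_s(x,x)=0$. So $L^s$ is exactly the Laplacian \eqref{eq:graph-L} of the complete weighted graph with weights $w_s$. The pairing computation behind \eqref{eq:graph-energy} then applies verbatim. Expand $\sum_x\sum_y w_s(x,y)(u(x)-u(y))\overline{u(x)}$, symmetrize in $(x,y)$, and add the two halves to obtain $\frac12\sum_{x,y}w_s(x,y)|u(x)-u(y)|^2$.

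No step is a serious obstacle. The only point needing care is the strictness in the first step, which rests on Lemma~\ref{lem:heat} holding for every $t>0$, together with continuity so that a positive integrand on $(0,\infty)$ gives a positive integral.
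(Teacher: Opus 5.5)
Your proposal is correct and follows the paper's proof step for step. You take the off-diagonal entries of the heat formula \eqref{eq:matrix-heat} together with Lemma~\ref{lem:heat}, get the diagonal from $L^s\one=0$, and symmetrize for the energy identity; your integrability and continuity checks only fill in details the paper leaves implicit.
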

\begin{proof}
The off-diagonal entries of $\Id$ vanish, so \eqref{eq:jump-weight} follows from \eqref{eq:matrix-heat}
and Lemma~\ref{lem:heat}.
Since $L^s\one=0$, its diagonal entry equals
$\sum_{y\ne x}w_s(x,y)$. This proves \eqref{eq:jump-form};
symmetrizing gives \eqref{eq:fractional-energy}.
\end{proof}

Thus $L^s$ is the Laplacian of a weighted complete graph on the same
vertices. Its weights retain information on the original graph,
but are generated by spectral calculus and are not an exact
power of graph distance in general.

\subsection{Well-posedness and a maximum principle}

\begin{proposition}\label{prop:wellposed}
For any nonempty proper partition $X=B\sqcup I$, the principal block
$(L^s)_{II}$ is positive definite. Consequently
$$A_q=(L^s)_{II}+Q>0$$ for every $q\ge0$, and the Dirichlet problem \eqref{eq:Dirichlet} with $K=L^s$ is well-posed.
\end{proposition}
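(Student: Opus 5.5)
The plan is to use the energy identity \eqref{eq:fractional-energy} from Proposition~\ref{prop:jump}, together with strict positivity of the jump weights, to show that $(L^s)_{II}$ has trivial kernel. The remaining claims then follow by elementary linear algebra.

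Take $v\in\C^I$ and extend it by zero to $\tilde v\in\C^X$, so $\tilde v|_B=0$. Then
\[
 v^*(L^s)_{II}v=\tilde v^*L^s\tilde v=\frac12\sum_{x,y}w_s(x,y)|\tilde v(x)-\tilde v(y)|^2\ge0 .
\]
Suppose this vanishes. Each term is nonnegative, and $w_s(x,y)>0$ for all $x\neq y$ by \eqref{eq:jump-weight}, so $\tilde v(x)=\tilde v(y)$ for every pair of distinct vertices. Hence $\tilde v$ is constant. Because $B$ is nonempty and $\tilde v|_B=0$, this constant is zero, so $v=0$. This proves $(L^s)_{II}>0$.

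The argument does not even need the strict positivity of every jump weight. It also works with $L^s\ge0$, $\ker L^s=\ker L=\spann\{\one\}$ and connectedness: zero energy gives $L^s\tilde v=0$, so $\tilde v$ is constant and therefore zero. I would mention this as an alternative.

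For $q\ge0$, we have $Q\ge0$, so $v^*A_qv=v^*(L^s)_{II}v+\sum_{x\in I}q(x)|v(x)|^2>0$ for $v\neq0$. Thus $A_q$ is positive definite and in particular invertible, so every $q\ge0$ is admissible. The Dirichlet problem \eqref{eq:Dirichlet} then has the unique solution $u_I=-A_q^{-1}(L^s)_{IB}f$ for every $f$, which is exactly the Poisson formula \eqref{eq:Poisson}, so the problem is well-posed.

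There is no real obstacle in this argument. The only point needing care is the step from zero energy to the constant kernel, and it relies on the nonemptiness of $B$ (which forces the constant to be zero) and the properness of the partition (so that $I$ is nonempty and the statement is not vacuous).
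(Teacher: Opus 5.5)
Your proof is correct and is essentially the paper's: extend $v$ by zero, note that $\tilde v^*L^s\tilde v=0$ forces $\tilde v$ to be constant, use $B\neq\emptyset$ to get $v=0$, and then add $Q\ge0$. The paper uses what you call your ``alternative'' route through $\ker L^s=\spann\{\one\}$, which does not need the strict positivity of the jump weights from Proposition~\ref{prop:jump}.
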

\begin{proof}
Let $\widetilde v$ be the extension of $v\in\C^I$ by zero on $B$.
Then $v^*(L^s)_{II}v=\widetilde v^*L^s\widetilde v\ge0$.
If this is exactly zero, then
$\widetilde v\in\ker L^s=\spann\{\one\}$.
A constant vanishing on the nonempty set $B$ has to be zero. Therefore $v=0$.
Adding a nonnegative diagonal matrix preserves positive definiteness.
\end{proof}

Consider the inhomogeneous problem
\begin{equation}\label{eq:inhomogeneous}
 (L^su)(x)+q(x)u(x)=F(x)\quad(x\in I),\qquad u|_B=f,
\end{equation}
where $q\ge0$, $F\ge0$, and $f\ge0$.

\begin{theorem}[Discrete fractional maximum principle]
\label{thm:maximum}
(i)\, The unique solution of \eqref{eq:inhomogeneous} satisfies $u\ge0$ on
$X$. 
(ii)\, Moreover, if $f$ or $F$ is not identically zero, then
\begin{equation}\label{eq:strict-positive}
 u(x)>0\quad\text{for every }x\in I.
\end{equation}
\end{theorem}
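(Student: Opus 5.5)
The plan is to use the jump form \eqref{eq:jump-form} of $L^s$ from Proposition~\ref{prop:jump}, where all off-diagonal entries are strictly negative, together with a minimum-point argument. Existence and uniqueness of the solution of \eqref{eq:inhomogeneous} come from Proposition~\ref{prop:wellposed}: writing $u_I=v$, we have $A_qv=F-(L^s)_{IB}f$, and $A_q>0$.

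\emph{Step (i).} Suppose $\min_I u<0$. Take $x_0\in I$ where $u$ attains its minimum over $I$; since $u=f\ge0$ on $B$, $x_0$ is also a global minimum of $u$ on $X$. By \eqref{eq:jump-form},
\[
 (L^su)(x_0)=\sum_{y\ne x_0}w_s(x_0,y)\bigl(u(x_0)-u(y)\bigr)\le0.
\]
Since $q(x_0)\ge0$ and $u(x_0)<0$, also $q(x_0)u(x_0)\le0$. The equation gives $F(x_0)=(L^su)(x_0)+q(x_0)u(x_0)\le0$, so with $F\ge0$ both terms must vanish. Vanishing of the first term, together with $w_s>0$, forces $u(y)=u(x_0)<0$ for every $y\ne x_0$, including $y\in B$. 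This contradicts $f\ge0$ on the nonempty set $B$. Hence $u\ge0$. Here the strict positivity of every jump weight from Proposition~\ref{prop:jump} is exactly what is used, and no connectivity argument through paths is needed.

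\emph{Step (ii).} Suppose $u(x_0)=0$ for some $x_0\in I$. By (i), $u\ge0$, so $x_0$ is a global minimum. The equation at $x_0$ reads
\[
 F(x_0)=-\sum_{y\ne x_0}w_s(x_0,y)\,u(y)\le0.
\]
Hence $F(x_0)=0$ and $u(y)=0$ for all $y\ne x_0$, so $u\equiv0$ on $X$. In particular $f=u|_B=0$. Substituting $u\equiv0$ into the equation gives $F=0$ on $I$. This contradicts the assumption that $f$ or $F$ is not identically zero.

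The main obstacle is mild: making sure the minimum is taken over all of $X$ rather than only over $I$, which is handled by $f\ge0$. Otherwise the argument rests entirely on the sign structure already established in Proposition~\ref{prop:jump}.
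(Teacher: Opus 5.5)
Your proposal is correct and matches the paper's proof: a minimum-point argument using the strictly positive jump weights $w_s$ from Proposition~\ref{prop:jump}, with part (ii) identical to the paper's. The only cosmetic difference is in (i). You reach the contradiction through the equality case, which forces $u\equiv u(x_0)<0$ including on $B$. The paper instead gets strict negativity of $(L^su)(x_0)$ directly from a single boundary term $w_s(x_0,b)(u(x_0)-u(b))<0$.
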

\begin{proof}
(i)\, Since $X$ is finite, a minimum is attained. If it were negative,
an interior point $x_0$ would attain it, since $f\ge0$.
For every $y$, $u(x_0)-u(y)\le0$. Any boundary vertex $b$ satisfies
$u(b)\ge0>u(x_0)$, and $w_s(x_0,b)>0$. Hence
\[
 (L^su)(x_0)=\sum_{y\ne x_0}w_s(x_0,y)(u(x_0)-u(y))<0.
\]
The term $q(x_0)u(x_0)$ is nonpositive. The sum contradicts
$F(x_0)\ge0$. Thus $u\ge0$.

(ii)\, If $u(x_0)=0$ at an interior vertex, its equation is
\[
 F(x_0)=-\sum_{y\ne x_0}w_s(x_0,y)u(y)\le0.
\]
Since $F(x_0)\ge0$, this sum has to be zero. Every weight is positive and
every value is nonnegative, so $u(y)=0$ for all $y\ne x_0$.
Thus $u\equiv0$, forcing both $f=0$ and $F=0$, contradicting the assumption.
\end{proof}

In particular, for $F=0$, any fixed nonnegative nonzero boundary input, possibly supported at a single accessible vertex, produces
positive values at every interior vertex for every $q\ge0$. Taking $f=0$ and $F=e_x$ also shows that every entry of $A_q^{-1}$ is positive.

\subsection{Rank enhancement on product Cayley graphs}\label{subsec:rank}

To complete the proof of the main theorem, we proceed from the derivation of an explicit cycle resolvent. Pass to 
the shifted fractional cycle via the integral formula \eqref{eq:matrix-resolvent}.
Then we use the tensor decomposition \eqref{eq:product-cross-decomposition} from Section~\ref{sec:preliminaries} to include the factor $H$.

\paragraph{The cycle resolvent.}

Let $N=2m$, $m\ge3$, and let $L_C=L_{C_N}$ be the degree-two
cycle Laplacian from Example~\ref{ex:cycle}. For $a>0$, we define
\begin{equation}\label{eq:r-definition}
 \Delta(a)=\sqrt{a(a+4)},\qquad
 r(a)=\frac{a+2-\Delta(a)}2=\frac{2}{a+2+\Delta(a)}.
\end{equation}
Note that $r=r(a)$ is the smaller root of
$$r^2-(a+2)r+1=0.$$ 
Thus $0<r<1$ and
\begin{equation}\label{eq:r-identities}
 r+r^{-1}=a+2,\qquad r^{-1}-r=\Delta(a).
\end{equation}

The cycle Green-function calculations below are standard. We include computational details here for self-completeness. We refer readers to
\cite{Ellis} for more discussions on discrete Green functions.

Put
$G_a(n)=r^{|n|}/\Delta(a)$. For $n\ne0$, the root equation gives
\[
 (a+2)G_a(n)-G_a(n-1)-G_a(n+1)=0.
\]
At $n=0$, the same expression is
\[
 \frac{a+2-2r}{\Delta(a)}
 =\frac{r^{-1}-r}{\Delta(a)}=1.
\]
Hence $G_a$ is a decaying fundamental solution. Periodizing it over the cycle, we obtain
\[
 R_a(d)=\sum_{\ell\in\Z}G_a(d+\ell N).
\]
The series converges absolutely. More explicitly, for $0\le d\le N-1$,
\[
 R_a(d)
 =\frac1{\Delta(a)}
   \left(\sum_{\ell=0}^\infty r^{d+\ell N}
        +\sum_{k=1}^\infty r^{kN-d}\right)
 =\frac{r^d+r^{N-d}}{\Delta(a)(1-r^N)}.
\]

We claim
\begin{equation}\label{eq:cycle-resolvent}
 [(a\Id+L_C)^{-1}]_{ij}=R_a((i-j)\bmod N).
\end{equation}
Indeed, the fundamental-solution identity is
\begin{equation}\label{eq:Rae0}
 (a+2)R_a(d)-R_a(d-1)-R_a(d+1)=\delta_{d,0},
 \qquad (a\Id+L_C)R_a=e_0,
\end{equation}
with all indices taken modulo $N$. Let
$v^{(j)}(i)=R_a((i-j)\bmod N)$. Then
\[
 [(a\Id+L_C)v^{(j)}](i)
 =\delta_{i,j},
\]
which implies $v^{(j)}=(a\Id+L_C)^{-1}e_j$.
This is the column $j$ of its inverse, proving \eqref{eq:cycle-resolvent}.

\paragraph{Full cross-block rank for a shifted fractional cycle.}

Let $\mu\ge0$ and define
\begin{equation}\label{eq:shifted-cycle}
 F_\mu=(\mu\Id+L_C)^s,\qquad
 I_C=\{0,\ldots,m-1\},\quad B_C=\{m,\ldots,2m-1\}.
\end{equation}
Let $C_\mu=(F_\mu)_{I_CB_C}$, with both sets initially in increasing
order. Let $J_m$ reverse $m$ coordinates. Then
\begin{equation}\label{eq:reflection}
 (C_\mu J_m)_{ij}=(F_\mu)_{i,,2m-1-j},\qquad 0\le i,j\le m-1.
\end{equation}
Note that the difference $i-(2m-1-j)$ is $i+j+1$ modulo $2m$.

\begin{theorem}\label{thm:reflected}
For every $\mu\ge0$ and $0<s<1$, the real symmetric matrix $-C_\mu J_m$ is positive definite. In particular, $C_\mu$ is invertible.
\end{theorem}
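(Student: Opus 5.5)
The plan is to write $F_\mu$ through the resolvent formula \eqref{eq:matrix-resolvent} and read off the off-diagonal block using the explicit cycle resolvent \eqref{eq:cycle-resolvent}. Since $\mu\Id+L_C$ is self-adjoint and positive semidefinite, \eqref{eq:matrix-resolvent} gives
\[
 F_\mu=b_s\int_0^\infty t^{s-1}(\mu\Id+L_C)\big((\mu+t)\Id+L_C\big)^{-1}\,dt .
\]
Using $(\mu\Id+L_C)(a\Id+L_C)^{-1}=\Id-t(a\Id+L_C)^{-1}$ with $a=\mu+t>0$, and the fact that $I_C\cap B_C=\emptyset$, the identity contributes nothing to the cross block. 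Hence
\[
 (F_\mu)_{ij}=-b_s\int_0^\infty t^{s}\,R_{\mu+t}\big((i-j)\bmod 2m\big)\,dt,\qquad i\in I_C,\ j\in B_C .
\]
Combining this with \eqref{eq:reflection} and the remark that $i-(2m-1-j)\equiv i+j+1$, I get
\[
 (-C_\mu J_m)_{ij}=b_s\int_0^\infty t^s R_{\mu+t}(i+j+1)\,dt ,\qquad 0\le i,j\le m-1 .
\]
This matrix is Hankel, hence real symmetric. Convergence follows from $R_a(d)=O(a^{-1})$ as $a\to\infty$: $R_a$ is an entry of $(a\Id+L_C)^{-1}$, which has norm at most $a^{-1}$. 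More sharply, $r(a)^{d}$ decays like $a^{-d}$ with $d\ge1$, so the integrand is $O(t^{s-2})$. Near $t=0$ the integrand is bounded if $\mu>0$ and $O(t^{s-1/2})$ if $\mu=0$, because $\Delta(t)\sim 2\sqrt t$.

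The key structural step is to split the Hankel kernel into rank-one positive pieces. Since $1\le i+j+1\le 2m-1$, the explicit formula for $R_a$ on $0\le d\le N-1$ applies and gives
\[
 R_a(i+j+1)=\frac{1}{\Delta(a)(1-r^{2m})}\Big(r\cdot r^{i}r^{j}+r^{2m-1}\cdot r^{-i}r^{-j}\Big),\qquad r=r(a)\in(0,1).
\]
Define $x(r)_i=r^i$ and $y(r)_i=r^{-i}$. Then for real $v\in\R^m$,
\[
 v^{T}(-C_\mu J_m)v=b_s\int_0^\infty \frac{t^s}{\Delta(1-r^{2m})}\Big(r\,p_v(r)^2+r^{2m-1}p_v(r^{-1})^2\Big)\,dt\ \ge 0,
\]
where $p_v(z)=\sum_{i=0}^{m-1}v_iz^i$. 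Here $b_s>0$, and all the prefactors are positive.

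For strict positivity, suppose the quadratic form vanishes. The integrand is continuous and nonnegative in $t$, so it vanishes identically on $(0,\infty)$. In particular $p_v(r(\mu+t))=0$ for all $t>0$. The map $a\mapsto r(a)$ is continuous and strictly decreasing, by $r=2/(a+2+\Delta(a))$. So $p_v$ vanishes on a nondegenerate interval, and a polynomial of degree less than $m$ with infinitely many zeros is zero. Hence $v=0$, and $-C_\mu J_m$ is positive definite. Since $J_m$ is invertible, $C_\mu$ is invertible.

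The main obstacle I expect is bookkeeping rather than a conceptual difficulty: getting the sign right (the minus from $\Id-t(a\Id+L_C)^{-1}$), checking that the index $i+j+1$ stays in the range $1,\dots,2m-1$ where the closed form of $R_a$ is valid, and verifying integrability at both ends, including the case $\mu=0$. Each rank-one term on its own yields only a rank-two matrix; it is the continuum of distinct values of $r$ in the integral that produces full rank, via the Vandermonde/polynomial argument.
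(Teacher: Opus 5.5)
Your proposal is correct and follows the paper's proof essentially step by step: the resolvent formula with $\Id-t(a\Id+L_C)^{-1}$ isolates the cross block, the explicit cycle resolvent is evaluated at index $i+j+1$, the quadratic form splits into $r\,p(r)^2+r^{2m-1}p(r^{-1})^2$, and $p$ must then vanish on the interval swept out by $r(\mu+t)$. One minor slip is in your side remark on convergence: for $\mu=0$ the factor $1-r^{2m}\sim 2m\sqrt t$ also vanishes at $t=0$ (this reflects the zero eigenvalue of $L_C$), so the integrand is $O(t^{s-1})$ rather than $O(t^{s-1/2})$; it is still integrable, and in any case convergence is inherited entrywise from \eqref{eq:matrix-resolvent}.
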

\begin{proof}
Applying \eqref{eq:matrix-resolvent} to $\mu\Id+L_C$ and using the identity
\[
 (\mu\Id+L_C)(\mu\Id+L_C+t\Id)^{-1}
 =\Id-t(\mu\Id+L_C+t\Id)^{-1},
\]
for distinct $x,y$ we obtain
\begin{equation}\label{eq:shifted-offdiagonal}
 (F_\mu)_{xy}
 =-b_s\int_0^\infty t^s[(\mu+t)\Id+L_C]^{-1}_{xy}\,dt.
\end{equation}
Let $a=\mu+t$ and $r=r(a)$. Formula \eqref{eq:cycle-resolvent} gives
\begin{equation}\label{eq:reflected-entry}
 -(C_\mu J_m)_{ij}
 =b_s\int_0^\infty
 \frac{t^s(r^{i+j+1}+r^{2m-i-j-1})}
 {\Delta(a)(1-r^{2m})}\,dt.
\end{equation}
This is symmetric in $i,j$. For $z\in\C^m$, finite summation gives
\begin{equation}\label{eq:positive-integral}
 -z^*C_\mu J_mz
 =b_s\int_0^\infty\frac{t^s}{\Delta(a)(1-r^{2m})}
 \left(r\left|\sum_{i=0}^{m-1}z_ir^i\right|^2
 +r^{2m-1}\left|\sum_{i=0}^{m-1}z_ir^{-i}\right|^2\right)dt.
\end{equation}
Note that every factor outside the squares is positive. If $z\ne0$, the
polynomial $p(r)=\sum_i z_ir^i$ is nonzero and has finitely many roots.
As $t$ varies, $r(\mu+t)$ traverses an interval in $(0,1)$.
Thus $|p(r)|$ is positive on a set of positive length, and the
integral \eqref{eq:positive-integral} is strictly positive.
\end{proof}

We remark that \eqref{eq:cycle-resolvent} implies
\begin{equation}\label{eq:resolvent-crossblock}
[(\mu\Id+L_C+t\Id)^{-1}]_{I_CB_C}J_m
= \frac{r}{\Delta(a)(1-r^{2m})}(vv^T+ ww^T),
\end{equation}
where 
$$v=(1,r,\ldots,r^{m-1})^T,\qquad 
w= (r^{m-1},\ldots,r,1)^T.$$
Both $vv^T$ and $ww^T$ have rank one for each fixed $t$ so the cross block \eqref{eq:resolvent-crossblock} has rank at most two. In \eqref{eq:reflected-entry} $r$ is a variable of $t$, and this integration leads to the rank enhancement.

\paragraph{Arbitrary finite groups multiplied by even cycles.}

Let $H$ be any finite group, $h=|H|$, with an inverse-symmetric
generating set $T$ and corresponding unweighted Laplacian $L_H$. There is no restriction on whether $H$ is abelian. The trivial group is allowed, with $T$ empty and $L_H=0$. Let
\begin{equation}\label{eq:product-group}
 G=H\times\Z_{2m},\quad m\ge3,\qquad
 S=(T\times\{0\})\cup\{(e_H,1),(e_H,-1)\}.
\end{equation}
The product Laplacian \eqref{eq:cartesian-L} is 
\begin{equation}\label{eq:product-L}
 L=L_H\otimes\Id_{2m}+\Id_h\otimes L_C.
\end{equation}
We choose
\begin{equation}\label{eq:product-partition}
 I=H\times I_C,\qquad B=H\times B_C.
\end{equation}
Both sets have $mh$ vertices.

\begin{theorem}[Fractional rank enhancement on product groups]\label{thm:rank-enhance}
For \eqref{eq:product-group}--\eqref{eq:product-partition},
\begin{equation}\label{eq:rank-enhance}
 \rank L_{IB}=2h,\qquad \rank(L^s)_{IB}=mh=|I|\quad(0<s<1).
\end{equation}
\end{theorem}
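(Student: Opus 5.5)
The two ranks need different arguments. The local block $L_{IB}$ we compute directly. The fractional block $(L^s)_{IB}$ we reduce, via the tensor decomposition \eqref{eq:rank-cross-blocks}, to Theorem~\ref{thm:reflected}.

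\emph{Local rank.} By \eqref{eq:product-L}, the cross block of $L_H\otimes\Id_{2m}$ between $I=H\times I_C$ and $B=H\times B_C$ vanishes, because $\Id_{2m}$ has no entries between $I_C$ and $B_C$. Hence
\[
L_{IB}=\Id_h\otimes (L_C)_{I_CB_C}.
\]
On the cycle, the only edges between $I_C=\{0,\ldots,m-1\}$ and $B_C=\{m,\ldots,2m-1\}$ are $\{m-1,m\}$ and $\{0,2m-1\}$. So $(L_C)_{I_CB_C}$ has exactly two nonzero entries, both equal to $-1$, at positions $(m-1,m)$ and $(0,2m-1)$. These entries lie in distinct rows and distinct columns, since $m\ge3$ (indeed $m\ge2$ suffices) makes $0\ne m-1$ and $m\ne 2m-1$. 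Thus this block has rank two. Since $\rank(\Id_h\otimes M)=h\,\rank M$, we get $\rank L_{IB}=2h$.

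\emph{Fractional rank.} Let $\psi_\alpha$, $\alpha=1,\ldots,h$, be an orthonormal eigenbasis of $L_H$ with eigenvalues $\mu_\alpha\ge0$; these are nonnegative because $L_H\ge0$ by \eqref{eq:graph-energy}. In the trivial-group case, $h=1$ and $\mu_1=0$. Take $X=H$, $Y=\Z_{2m}$, $L_1=L_H$ and $L_2=L_C$. Then \eqref{eq:rank-cross-blocks} gives
\[
\rank(L^s)_{IB}=\sum_{\alpha=1}^h\rank\big[(\mu_\alpha\Id+L_C)^s\big]_{I_CB_C}=\sum_{\alpha=1}^h\rank C_{\mu_\alpha}.
\]
By Theorem~\ref{thm:reflected}, each $C_{\mu_\alpha}$ is invertible for every $\mu_\alpha\ge0$ and $0<s<1$, so each summand equals $m$. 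Summing over $\alpha$ gives $mh=|I|$.

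The genuine difficulty is entirely contained in Theorem~\ref{thm:reflected}, which is already available; the shift $\mu\ge0$ there is exactly what accommodates the nonzero eigenvalues of $L_H$. What remains is bookkeeping. We must check that the ordering conventions match: with the product ordering, $H$ is the outer index and $\Z_{2m}$ the inner one, so the restriction $I=H\times I_C$ is compatible with the Kronecker block structure \eqref{eq:Kronecker-product-matrix}. This is what justifies \eqref{eq:product-cross-decomposition} and the identity $L_{IB}=\Id_h\otimes(L_C)_{I_CB_C}$.
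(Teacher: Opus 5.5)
Your proposal is correct and follows the paper's proof essentially step for step. The classical block is computed as $L_{IB}=\Id_h\otimes(L_C)_{I_CB_C}$, where only the edges $0\leftrightarrow 2m-1$ and $m-1\leftrightarrow m$ cross the partition, and the fractional block is reduced via the eigenbasis of $L_H$ and \eqref{eq:rank-cross-blocks} to the invertibility of each $C_{\mu_\alpha}$ given by Theorem~\ref{thm:reflected}.
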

\begin{proof}
We treat the classical and fractional cases separately. Recall that $m\ge3$.

\noindent\textit{The classical cross block.}
The first summand in \eqref{eq:product-L} has no contributions coupling $I$ to
$B$. Consequently,
\[
 L_{IB}=\Id_h\otimes E,\qquad E=(L_C)_{I_CB_C}.
\]
We write $I_C=(0,\ldots,m-1)$ and $B_C=(m,\ldots,2m-1)$ in
increasing order as before. Only the cycle edges $0\leftrightarrow 2m-1$
and $m-1\leftrightarrow m$ cross the partition. Thus
\[
 E=\begin{pmatrix}
    0&0_{1\times(m-2)}&-1\\
    0_{(m-2)\times1}&0_{(m-2)\times(m-2)}&0_{(m-2)\times1}\\
    -1&0_{1\times(m-2)}&0
   \end{pmatrix}.
\]
Therefore $\rank E=2$. $L_{IB}$ has $h$ diagonal blocks so
$\rank L_{IB}=2h$.

\noindent\textit{The fractional cross block.}
Choose an orthonormal eigenbasis
$\psi_1,\ldots,\psi_h$ of $L_H$, with
$$L_H\psi_\alpha=\mu_\alpha\psi_\alpha$$and $\mu_\alpha\ge0$.
Let $V_H=(\psi_1,\ldots,\psi_h)$, which is unitary. Applying
\eqref{eq:unitary-cross-blocks} with $X=H$, $Y=\Z_{2m}$,
$I_Y=I_C$, and $B_Y=B_C$, we obtain
\[
 (V_H^*\otimes\Id_m)(L^s)_{IB}(V_H\otimes\Id_m)
 =\begin{pmatrix}
   C_{\mu_1}&0&\cdots&0\\
   0&C_{\mu_2}&\cdots&0\\
   \vdots&\vdots&\ddots&\vdots\\
   0&0&\cdots&C_{\mu_h}
  \end{pmatrix},
\]
\[C_{\mu_\alpha}=\big[(\mu_\alpha\Id_{2m}+L_C)^s\big]_{I_CB_C}.
\]
By \eqref{eq:rank-cross-blocks} and 
Theorem~\ref{thm:reflected}, we conclude that
\[
 \rank(L^s)_{IB}
 =\sum_{\alpha=1}^h\rank C_{\mu_\alpha}
 =mh=|I|.
\]
\end{proof}

\medskip
\begin{proof}[Completion of the proof of Theorem~\ref{thm:positive-single}]
Proposition~\ref{prop:wellposed} shows that every $q\ge0$ is admissible.
For the fixed nonnegative nonzero input, Theorem~\ref{thm:maximum}
gives $u_I(x)>0$ at every interior vertex. Theorem~\ref{thm:rank-enhance}
establishes \eqref{eq:main-ranks} and makes $K_{IB}$ and $K_{BI}$
invertible square matrices. Thus the hypotheses of
Theorem~\ref{thm:single} hold for the same fixed input throughout the
potential class. Applying its reconstruction formula gives
\eqref{eq:positive-recovery}, with $v=u_I$, and proves uniqueness.
\end{proof}

\section{Computational examples and further remarks}\label{sec:examples}

We first make the rank enhancement in Section~\ref{sec:fractional} concrete on the simplest example $C_6$. 
Then we explain why the fractional power alone does not always imply higher coupling rank.

\subsection{Explicit matrices for the cycle with consecutive halves}
\begin{example}\label{ex:matrices}
Consider the six-vertex cycle.
Use the basis $(e_0,e_1,e_2,e_3,e_4,e_5)$, with
$I_C=(0,1,2)$ and $B_C=(3,4,5)$. Then
\begin{equation}\label{eq:C6-L}
 L_C=\begin{pmatrix}
 2&-1&0&0&0&-1\\
 -1&2&-1&0&0&0\\
 0&-1&2&-1&0&0\\
 0&0&-1&2&-1&0\\
 0&0&0&-1&2&-1\\
 -1&0&0&0&-1&2
 \end{pmatrix},\qquad
 E:=(L_C)_{I_CB_C}=\begin{pmatrix}0&0&-1\\0&0&0\\-1&0&0\end{pmatrix}.
\end{equation}

Let $\theta_k=2\pi k/6$ and
$\phi_k(j)=6^{-1/2}e^{i\theta_kj}$. A direct computation gives
\[
 \ip{\phi_k}{\phi_\ell}_{\Z_6}
 =\frac16\sum_{j=0}^5 e^{2\pi i(k-\ell)j/6}
 =\delta_{k\ell},\qquad 0\le k,\ell\le5.
\]
Thus these six vectors form an orthonormal basis. Note that
\[
 L_C\phi_k=(2-e^{i\theta_k}-e^{-i\theta_k})\phi_k
 =(2-2\cos\theta_k)\phi_k.
\]
The eigenvalues in the order $k=0,\ldots,5$ are
\[
 0,\ 1,\ 3,\ 4,\ 3,\ 1.
\]
For $\mu\ge0$, spectral calculus gives
\[(\mu\Id_6+L_C)^s= \sum_{k=0}^5(\mu+2-2\cos\theta_k)^s \phi_k\phi^*_k.\]
Pairing $k$ with $6-k$ gives
\begin{align}\label{eq:C6-Fourier}
 [(\mu\Id_6+L_C)^s]_{ij}
 &=\frac16\sum_{k=0}^5(\mu+2-2\cos\theta_k)^s
                 e^{i\theta_k(i-j)}\notag\\
 &=\frac16\left[\mu^s
  +2(\mu+1)^s\cos\frac{\pi(i-j)}3
  +2(\mu+3)^s\cos\frac{2\pi(i-j)}3
  +(\mu+4)^s(-1)^{i-j}\right].
\end{align}
We define four real coefficients
\begin{align}\label{eq:C6-coefficients}
 \alpha_\mu&=\frac{\mu^s+2(\mu+1)^s+2(\mu+3)^s+(\mu+4)^s}{6},\notag\\
 \beta_\mu&=\frac{\mu^s+(\mu+1)^s-(\mu+3)^s-(\mu+4)^s}{6},\notag\\
 \gamma_\mu&=\frac{\mu^s-(\mu+1)^s-(\mu+3)^s+(\mu+4)^s}{6},\\
 \delta_\mu&=\frac{\mu^s-2(\mu+1)^s+2(\mu+3)^s-(\mu+4)^s}{6}.\notag
\end{align}
Then the whole $6\times6$ shifted matrix is
\begin{equation}\label{eq:C6-Fmatrix}
 F_\mu=
 \begin{pmatrix}
 \alpha_\mu&\beta_\mu&\gamma_\mu&\delta_\mu&\gamma_\mu&\beta_\mu\\
 \beta_\mu&\alpha_\mu&\beta_\mu&\gamma_\mu&\delta_\mu&\gamma_\mu\\
 \gamma_\mu&\beta_\mu&\alpha_\mu&\beta_\mu&\gamma_\mu&\delta_\mu\\
 \delta_\mu&\gamma_\mu&\beta_\mu&\alpha_\mu&\beta_\mu&\gamma_\mu\\
 \gamma_\mu&\delta_\mu&\gamma_\mu&\beta_\mu&\alpha_\mu&\beta_\mu\\
 \beta_\mu&\gamma_\mu&\delta_\mu&\gamma_\mu&\beta_\mu&\alpha_\mu
 \end{pmatrix},\quad
 C_\mu=\begin{pmatrix}
 \delta_\mu&\gamma_\mu&\beta_\mu\\
 \gamma_\mu&\delta_\mu&\gamma_\mu\\
 \beta_\mu&\gamma_\mu&\delta_\mu
 \end{pmatrix}.
\end{equation}
In particular, $L_C^s=F_0$ and $(L_C^s)_{I_CB_C}=C_0$.
For clarity, its coefficients are
\begin{equation}\label{eq:C6-zero-coefficients}
 \begin{aligned}
 \alpha_0&=(2+2\cdot3^s+4^s)/6,&
 \beta_0&=(1-3^s-4^s)/6,\\
 \gamma_0&=(-1-3^s+4^s)/6,&
 \delta_0&=(-2+2\cdot3^s-4^s)/6.
 \end{aligned}
\end{equation}
Theorem~\ref{thm:reflected} implies $\rank C_0=3$, compared with
$\rank E=2$.

\end{example}

\subsection{The parity partition of the same cycle}

We remark that rank enhancement depends on the specific partition of a fixed graph. Keep the same cycle $C_6$ but choose the parity partition
\[
 I=(0,2,4),\qquad B=(1,3,5),
\]
with the displayed orderings. We obtain the relevant cross blocks
simply by selecting rows $0,2,4$ and columns $1,3,5$ from the full
matrices computed earlier. More explicitly, for the classical operator, selecting these entries in
\eqref{eq:C6-L} gives
\begin{equation}\label{eq:C6-parity-L}
 (L_C)_{IB}
 =\begin{pmatrix}
 -1&0&-1\\
 -1&-1&0\\
 0&-1&-1
 \end{pmatrix},
\end{equation}
which has rank three. For the fractional operator, taking $\mu=0$ and selecting the same rows and columns in \eqref{eq:C6-Fmatrix}, we obtain
\begin{equation}\label{eq:C6-parity-fractional}
 (L_C^s)_{IB}
 =\begin{pmatrix}
 \beta_0&\delta_0&\beta_0\\
 \beta_0&\beta_0&\delta_0\\
 \delta_0&\beta_0&\beta_0
 \end{pmatrix},
\end{equation}
which has rank three as well.

Thus the fractional power improves the coupling rank for the
consecutive-half partition, whereas the parity partition already has
full classical coupling rank.

\subsection{A complete graph}

We also remark that rank enhancement depends on the graph and its weights. For the unweighted
complete graph $K_N$, every vertex has degree $N-1$, its adjacency
matrix is $J_N-\Id_N$, where $J_N$ is the all-ones matrix. Then the Laplacian is
\[
 L=N\Id_N-J_N.
\]
Note that $P=N^{-1}J_N$ is the orthogonal projection onto constant
vectors: $P^2=P$, $P^*=P$, and $P\one=\one$.
The complement $\Id_N-P$ projects onto vectors with zero sum. Its eigenvalues are $0$ and $1$. It follows that
\[
 \ L^s= N^s\left(\Id_N-\frac1N J_N\right).
\]
Every cross block has rank one. Fractionalization does not improve the rank here.


\begin{thebibliography}{99}

\bibitem{BianchiEtAl}
Davide Bianchi, Marco Donatelli, Fabio Durastante, and Mariarosa Mazza,
\emph{Compatibility, embedding and regularization of non-local random walks
on graphs}, Journal of Mathematical Analysis and Applications
\textbf{511} (2022), no.~1, 126020.

\bibitem{BlastenIsozakiLassasLu}
Emilia Bl\aa sten, Hiroshi Isozaki, Matti Lassas, and Jinpeng Lu,
\emph{Inverse problems for discrete heat equations and random walks for a
class of graphs}, SIAM Journal on Discrete Mathematics
\textbf{37} (2023), no.~2, 831--863.

\bibitem{BoyerGarzellaGuevaraVasquez}
Justin Boyer, Jack Garzella, and Fernando Guevara Vasquez,
\emph{On the solvability of the discrete conductivity and Schr\"odinger
inverse problems}, SIAM Journal on Applied Mathematics
\textbf{76} (2016), no.~3, 1053--1075.

\bibitem{Chung}
Fan Chung, \emph{Spectral Graph Theory}, CBMS Regional Conference Series
in Mathematics, vol.~92, American Mathematical Society, Providence, RI,
1997.

\bibitem{CoviLassas}
Giovanni Covi and Matti Lassas,
\emph{An inverse problem for fractional random walks on finite graphs},
Inverse Problems \textbf{42} (2026), no.~4, 045010.

\bibitem{Ellis}
Robert Ellis, \emph{Discrete Green's functions for products of regular
graphs}, arXiv:math/0309080, 2003.

\bibitem{FeizmohammadiGhoshKrupchykUhlmann}
Ali Feizmohammadi, Tuhin Ghosh, Katya Krupchyk, and Gunther Uhlmann,
\emph{Fractional anisotropic Calder\'on problem on closed Riemannian
manifolds}, Journal of Differential Geometry \textbf{131} (2025),
no.~2, 401--414.

\bibitem{FeizmohammadiKrupchykUhlmann}
Ali Feizmohammadi, Katya Krupchyk, and Gunther Uhlmann,
\emph{Calder\'on problem for fractional Schr\"odinger operators on closed
Riemannian manifolds}, Inventiones Mathematicae \textbf{246} (2026),
13--66.

\bibitem{FernandezRoncalRuland}
Aingeru Fern\'andez-Bertolin, Luz Roncal, and Angkana R\"uland,
\emph{On (global) unique continuation properties of the fractional discrete
Laplacian}, Journal of Functional Analysis \textbf{286} (2024), no.~9,
110375.

\bibitem{GhoshRulandSaloUhlmann}
Tuhin Ghosh, Angkana R\"uland, Mikko Salo, and Gunther Uhlmann,
\emph{Uniqueness and reconstruction for the fractional Calder\'on problem
with a single measurement}, Journal of Functional Analysis
\textbf{279} (2020), no.~1, 108505.

\bibitem{GhoshSaloUhlmann}
Tuhin Ghosh, Mikko Salo, and Gunther Uhlmann,
\emph{The Calder\'on problem for the fractional Schr\"odinger equation},
Analysis \& PDE \textbf{13} (2020), no.~2, 455--475.

\bibitem{Kwasnicki}
Mateusz Kwa\'snicki,
\emph{Ten equivalent definitions of the fractional Laplace operator},
Fractional Calculus and Applied Analysis \textbf{20} (2017), no.~1,
7--51.

\bibitem{RulandSalo}
Angkana R\"uland and Mikko Salo,
\emph{The fractional Calder\'on problem: low regularity and stability},
Nonlinear Analysis \textbf{193} (2020), 111529.

\bibitem{Serre}
Jean-Pierre Serre, \emph{Linear Representations of Finite Groups},
Graduate Texts in Mathematics, vol.~42, Springer-Verlag, New York, 1977.

\bibitem{SylvesterUhlmann}
John Sylvester and Gunther Uhlmann,
\emph{A global uniqueness theorem for an inverse boundary value problem},
Annals of Mathematics (2) \textbf{125} (1987), no.~1, 153--169.

\end{thebibliography}
\end{document}